\theoremstyle{plain}
\newtheorem{theorem}{Theorem}[section]
\theoremstyle{definition}
\newtheorem{defi}[theorem]{Definition}
\newtheorem{lm}[theorem]{Lemma}
\newtheorem{theorem/conj}[theorem]{Theorem-Conjecture}
\newtheorem{prop}[theorem]{Proposition}
\newtheorem{conj}[theorem]{Conjecture}
\theoremstyle{remark}
\newtheorem{rmk}{Remark}
\DeclareMathOperator{\sing}{Sing}
\DeclareMathOperator{\Int}{Int}
\DeclareMathOperator{\Proj}{Proj}
\DeclareMathOperator{\Spec}{Spec}
\DeclareMathOperator{\rank}{rank}
\DeclareMathOperator{\Cox}{Cox}
\DeclareMathOperator{\Mob}{Mob}
\DeclareMathOperator{\MMob}{\overline{Mob}}
\DeclareMathOperator{\Bs}{Bs}
\DeclareMathOperator{\Gl}{GL}
\DeclareMathOperator{\Nef}{Nef}
\DeclareMathOperator{\Eff}{Eff}
\DeclareMathOperator{\NNE}{\overline{NE}}
\newcommand{\Pic}{\operatorname{Pic}}
\newcommand{\convex}{\operatorname{Convex}}
\def \Q {\mathbb{Q}}
\def \C {\mathbb{C}}
\def \Z {\mathbb{Z}}
\def \F {\mathbb{F}}
\newcommand{\x}{\mathrm{x}}
\newcommand{\y}{\mathrm{y}}
\renewcommand{\t}{\mathrm{t}}
\renewcommand{\u}{\mathrm{u}}
\newcommand{\PP}{\mathbb{P}}
\newcommand\qt{{\slash\kern-0.65ex\slash}}
\title{Singular del Pezzo fibrations and birational rigidity}
\author{Hamid Ahmadinezhad}
\keywords{Birational Automorphism; Mori Fibre Space; Sarkisov Program; Del Pezzo Fibration; Birational Rigidity.\vspace{0.2cm} }
\subjclass[2010]{14E05, 14E30 and 14E08}
\begin{document}

\begin{abstract} A known conjecture of Grinenko in birational geometry asserts that a Mori fibre space with the structure of del Pezzo fibration of low degree is birationally rigid if and only if its anticanonical class is an interior point in the cone of mobile divisors. The conjecture is proved to be true for smooth models (with a generality assumption for degree $3$). It is speculated that the conjecture holds for, at least, Gorenstein models in degree $1$ and $2$. In this article, I present a (Gorenstein) counterexample in degree $2$ to this conjecture.
\end{abstract}

\maketitle


\section{Introduction}
All varieties in this article are projective and defined over the field of complex numbers.
Minimal model program played on a uniruled $3$-fold results in a Mori fibre space (Mfs for short). Such output for a given variety is not necessarily unique. The structure of the endpoints is studied via the birational invariant called {\it pliability} of the Mfs, see Definition~\ref{pliability}. A Mori fibre space is a $\Q$-factorial variety with at worst terminal singularities together with a morphism $\varphi\colon X\rightarrow Z$, to a variety $Z$ of strictly smaller dimension, such that $-K_X$, the anti-canonical class of $X$, is $\varphi$-ample and $$\rank\Pic(X)-\rank\Pic(Z)=1.$$

\begin{defi}Let $X\rightarrow Z$ and $X^\prime\rightarrow Z^\prime$ be Mori fibre spaces. A birational map $f\!\colon\!X\!\dashrightarrow\!X^\prime$ is {\it square} if it fits into a commutative diagram\begin{center}$\xymatrixcolsep{3pc}\xymatrixrowsep{3pc}
\xymatrix{
X\ar@{-->}^f[r]\ar[d]& X^\prime\ar[d]\\
Z\ar@{-->}[r]^g&Z^\prime
}$\end{center} where $g$ is birational and, in addition, the map $f_L\!\colon\!X_L\!\dashrightarrow\!X^\prime_L$ induced on generic fibres  is biregular. In this case we say that $X/Z$ 
and $X^\prime/Z^\prime$ are {\it square birational}. We denote this by $X/Z\sim X^\prime/Z^\prime$.
\end{defi}

\begin{defi}[Corti~\cite{corti-mella}]\label{pliability}The {\it pliability} of a Mfs $X\rightarrow Z$ is the set
\[\mathcal{P}(X\slash Z)=\{\text{Mfs } Y\rightarrow T\mid X\text{ is birational to } Y\}\slash\sim\] 
A Mfs $X\rightarrow Z$ is said to be {\it birationally rigid} if $\mathcal{P}(X\slash Z)$ contains a single element.\end{defi}

A main goal in the birational geometry of $3$-folds is to study the geometry of Mfs, and their pliability. Note that finite pliability, and in particular birational rigidity, implies non-rationality. There are three types of Mfs in dimension $3$, depending on the dimension of $Z$. If $\dim(Z)=1$, then the fibres must be del Pezzo surfaces. When the fibration is over $\PP^1$, I denote this by $dP_n/\PP^1$, where $n=K^2_\eta$ and $\eta$ is the generic fibre. 

It is known that a $dP_n/\PP^1$ is not birationally rigid when the total space is smooth and $n\geq 4$. For $n>4$ the $3$-fold is rational (see for example \cite{isk}), hence non-rigid, and it was shown in \cite{alexeev} that $dP_4/\PP^1$ are birational to conic bundles. 

Understanding conditions under which a $dP_n/\PP^1$, for $n\leq 3$, is birationally rigid  is a key step in providing the full picture of MMP, and hence the classification, in dimension three. Birational rigidity for the smooth models of degree $1,2$ and $3$ is well studied, see for example \cite{Pu98b}. However, as we see later, while the smoothness assumption for degree $3$ is only a generality assumption, considering the smooth case for $n=1,2$ is not very natural. Hence the necessity of considering singular cases is apparent.

In this article, I focus on a well-known conjecture (Conjecture~\ref{conjecture-Grinenko}) on this topic that connects birational rigidity of del Pezzo fibrations of low degree to the structure of their mobile cone. A counterexample to this conjecture is provided when the 3-fold admits certain singularities. 

{\bf Acknowledgement.} 
I would like to thank Gavin Brown for his support during my PhD, where I learnt most of the techniques used in this article. I would also like to thank Ivan Cheltsov and Mikhail Grinenko for showing interest in this example. I am also grateful to the anonymous referees, whose valuable comments helped me shape the presentation of this paper.

\section{Grinenko's conjecture}

Pukhlikov in \cite{Pu98b} proved that a general smooth $dP_3/\PP^1$ is birationally rigid if the class of $1$-cycles $mK^2_X-L$ is not effective for any $m\in\Z$, where $L$ is the class of a line in a fibre. This condition is famously known as the $K^2$-condition.

\begin{defi}\label{k2} A del Pezzo fibration is said to satisfy $K^2$-condition if the $1$-cycle $K^2$ does not lie in the interior of the Mori cone $\NNE$.\end{defi}

The birational rigidity of smooth $dP_n/\PP^1$ for $d=1,2$ was also considered in \cite{Pu98b} and the criteria for rigidity are similar to that for $n=3$. 

In a sequential work \cite{Grinenko, Gri2, Gri, Gri1, Gri3, Gri4, Gri5}, Grinenko realised and argued evidently that it is more natural to consider $K$-condition instead of the $K^2$-condition.

\begin{defi}A del Pezzo fibration is said to satisfy $K$-condition if the anticanonical divisor does not lie in the interior of the Mobile cone.\end{defi}

\begin{rmk} It is a fun, and not difficult, exercise to check that $K^2$-condition implies $K$-condition. And the implication does not hold in the opposite direction.\end{rmk}

One of the most significant observations of Grinenko was the following theorem.

\begin{theorem}\cite{Grinenko, Gri} Let $X$ be a smooth $3$-fold Mfs, with del Pezzo surfaces of degree $1$ or $2$, or a general degree $3$, fibres over $\PP^1$. Then $X$ is birationally rigid if it satisfies the $K$-condition.\end{theorem}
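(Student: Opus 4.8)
The plan is to run the Noether--Fano--Iskovskikh (NFI) method together with the technique of maximal singularities, in the relative (over $\PP^1$) form developed by Pukhlikov and Corti. Suppose for contradiction that $X/\PP^1$ is not birationally rigid, so that there is a birational map $f\colon X\dashrightarrow X'$ onto a Mori fibre space $X'/Z'$ that is \emph{not} square. Fix a very ample mobile linear system $\Lambda$ on $X'$ and let $\MM=f^{-1}_*\Lambda$ be its strict transform on $X$. Since for a standard model $\Pic(X)=\Z(-K_X)\oplus\Z F$, with $F$ the class of a fibre, one may write $\MM\subset|{-nK_X+lF}|$ for integers $n>0$ and $l$. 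The NFI criterion then forces the pair $(X,\tfrac1n\MM)$ to be \emph{not} canonical: there is a divisorial valuation $E$ over $X$ with
\[
\discr(E,X)<\tfrac1n\,\mult_E\MM ,
\]
and its centre $B=\mathrm{centre}_X(E)$ is a \emph{maximal singularity}. The whole argument reduces to showing that, under the $K$-condition, no such $B$ can exist (for $d=1,2$ and for general $d=3$), whence $f$ must be square after all.

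Because $\MM$ is mobile, $B$ cannot be a divisor, so $B$ is either a point or an irreducible curve. I would split the curve case according to position relative to $\varphi$: a \emph{horizontal} curve dominating $\PP^1$ (a section or multisection), or a \emph{vertical} curve contained in a single fibre. The remainder of the proof is then the systematic exclusion of each of these three possibilities.

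First I would dispose of the local cases, namely points and vertical curves, using the quadratic multiplicity inequalities of Corti--Pukhlikov type (the $4n^2$- and $8n^2$-inequalities). Restricting $\MM$ to a general fibre $S$, a del Pezzo surface of degree $d\le 3$, produces a mobile system on $S$ whose multiplicity at the centre is forced to be so large that it violates the positivity of $-K_S$, a contradiction. The essential and hardest case is that of a horizontal curve $B$, and this is exactly where the $K$-condition does the work. Since $\Pic(X)$ has rank two, $\Mob(X)$ is a two-dimensional cone with one edge spanned by the nef class $F$; the $K$-condition asserts that $-K_X$ lies on or beyond its other edge, i.e.\ $-K_X\notin\Int\Mob(X)$. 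I would argue that a maximal singularity over a horizontal curve, via the intersection of $\MM$ with $B$ on the generic fibre and the Noether--Fano inequality, forces the mobile class $-K_X+\tfrac{l}{n}F$ to have $l<0$, which places $-K_X$ strictly inside $\Mob(X)$ --- precisely what the $K$-condition forbids.

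The main obstacle, as in every application of this method, is this last horizontal case: extracting an intersection estimate sharp enough that the $K$-condition can close it off, since the local inequalities alone do not see the fibration structure. A secondary difficulty is the degree-$3$ statement, where one must invoke the generality hypothesis to remove the finitely many extra maximal centres (lines and conics lying in special fibres) that survive for non-general $X$. Once all maximal centres are excluded, the NFI criterion implies that $f$ is square; as this holds for every birational map to a Mori fibre space, $\mathcal{P}(X/\PP^1)$ is a single point and $X$ is birationally rigid.
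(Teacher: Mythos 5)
First, note that the paper itself offers no proof of this statement: it is quoted from Grinenko's work \cite{Grinenko, Gri}, so your proposal can only be measured against the proofs in that literature. Against those, your outline has a genuine structural gap: you cannot hope to \emph{exclude} all maximal centres for $d=1,2$, because they provably exist. A smooth $dP_1$ or $dP_2$ fibration carries fibrewise Bertini-- (and for related centres Geiser--) type involutions that are square birational self-maps but not biregular on $X$; taking $\MM$ to be the transform under such an involution of a very ample complete system produces a non-canonical pair $(X,\tfrac1n\MM)$ with a maximal centre at a point of a fibre. Hence a proof that ``no such $B$ can exist, whence $f$ is square'' would show $\Bir(X)=\Aut(X)$, which is false. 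The actual proofs of Pukhlikov and Grinenko are inductive \emph{untwisting} arguments: each maximal centre at a point is untwisted by composing with the corresponding fibrewise involution (which is square, so this is harmless for pliability), the degree $n$ drops, and only the residual centres are excluded. Your plan omits untwisting entirely, and for degrees $1$ and $2$ the theorem cannot be proved without it.

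Second, you have the case analysis inverted. Restriction of $\MM$ to a \emph{general} fibre $S$ says nothing about a point or a vertical curve, since those lie in a single special fibre that a general $S$ misses; the general-fibre restriction (degree of $\MM|_S$ versus multiplicities, the $4n^2$-type counting) is precisely the tool that handles \emph{horizontal} curves, for which $k\,n^2<(\MM|_S)^2=dn^2$ already kills the case when $d\leq 3$ (with the generality assumption absorbing the residual subcases for $d=3$). Conversely, the vertical centres and infinitely near singularities over points are where the global numerical hypothesis genuinely enters, via estimates on $\MM^2$ as a $1$-cycle on $X$ and, in Grinenko's refinement, via the geometry of $\Mob(X)$: the $K$-condition's role is (a) to force $l\geq 0$ a priori for any mobile $\MM\subset|{-nK_X+lF}|$ (your $l<0$ observation, which is correct but is an immediate preliminary, not the content of the horizontal case), and (b) to ensure that the links arising after untwisting return to $X/\PP^1$ itself rather than to a second model sitting in the interior decomposition of the mobile cone --- this is exactly the mechanism the present paper's counterexample breaks in the singular setting. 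So the skeleton (NFI plus classification of centres) is right, but as written the proof would both fail on the existent point centres and misapply the restriction inequality where it has no force.
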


He then conjectured that this must hold in general, as formulated in the conjecture below, with no restriction on the singularities.

\begin{conj}\label{conjecture-Grinenko}(\cite{Grinenko} Conjecture~1.5 and \cite{Gri2} Conjecture~1.6) Let $X$ be a $3$-fold Mori fibration of del Pezzo surfaces of degree $1,2$ or $3$ over $\PP^1$. Then $X$ is birationally rigid if and only if it satisfies the $K$-condition.\end{conj}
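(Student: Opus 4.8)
Since the statement to be established is a conjecture and the abstract announces a Gorenstein counterexample in degree $2$, the task is really to \emph{disprove} the ``only if'' direction of Conjecture~\ref{conjecture-Grinenko} in the singular setting: I would exhibit a Gorenstein $dP_2/\PP^1$ that satisfies the $K$-condition yet is not birationally rigid. The overall plan is to build such a $3$-fold explicitly inside a rank-$2$ toric ambient space, verify it is a genuine Mori fibre space with Gorenstein terminal singularities, check by a direct cone computation that $-K_X$ lies on the boundary (rather than the interior) of $\MMob(X)$, so that the $K$-condition holds, and then produce an honest Sarkisov link to a second Mori fibre space that is \emph{not} square birational to the first.

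For the construction I would realise the degree-$2$ fibration through its defining double-cover structure: the generic fibre of a $dP_2$ is a double cover of $\PP^2$ branched along a quartic, so I would take a toric ambient $T$ with Cox ring $\C[t_0,t_1,x,y,z,w]$ graded by $\Cl(T)\cong\Z^2$, with $t_0,t_1$ the coordinates on the base $\PP^1$ and $w$ of fibre-weight $2$, and let $X=\{w^2=f(t_0,t_1,x,y,z)\}$ for a suitably bihomogeneous $f$ that is quartic in $x,y,z$. The bidegrees are chosen so that over one special fibre the total space acquires a single Gorenstein (compound du Val) point while remaining terminal and $\Q$-factorial elsewhere; adjunction then yields $-K_X$ and the relation $\rank\Pic(X)-\rank\Pic(\PP^1)=1$. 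I would read off $\Eff(X)$ and $\MMob(X)$ from the Mori chamber / two-ray decomposition of the effective cone of $T$, restricted to $X$.

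With the cones in hand, the $K$-condition reduces to locating $-K_X$ inside the two-dimensional cone $\Eff(X)$: the aim is to show it sits on a wall of $\MMob(X)$ and not in its interior. The non-rigidity then comes from the opposite wall of the two-ray game on $T$: crossing it induces an antiflip or flop followed by a fibre-type contraction, giving a birational map $X\dashrightarrow X'$ onto a second del Pezzo fibration over $\PP^1$. The crucial point is that this link is \emph{not} square: although both sides fibre over $\PP^1$, the induced map $f_L\colon X_L\dashrightarrow X'_L$ on generic fibres fails to be biregular, so $X/\PP^1\not\sim X'/\PP^1$ and $\PPP(X/\PP^1)$ contains at least two elements.

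The main obstacle, and where I expect to spend most of the care, is precisely this last step: certifying that the link is genuinely non-square and that $X'$ is itself a Mori fibre space with terminal singularities. It is easy to manufacture birational self-maps that merely reparametrise the same fibration — square maps that contribute nothing to pliability — so the delicate part is to check, via the explicit generic-fibre equations, that $f_L$ really is not biregular, and at the same time to see that the singularity introduced in the construction is exactly what breaks the smooth-case argument behind Grinenko's theorem. Confirming that this special point stays Gorenstein terminal throughout the link, so that the example genuinely lives in the Gorenstein regime speculated to satisfy the conjecture, is the secondary technical hurdle.
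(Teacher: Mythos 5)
Your overall aim is correct (exhibit a Gorenstein $dP_2/\PP^1$ satisfying the $K$-condition that is not birationally rigid, built as a hypersurface in a rank-two toric ambient space with the cones read off the Mori chamber structure of $T$), but the engine of non-rigidity in your plan is the wrong one, and it would fail for essentially structural reasons. You propose to obtain the second Mori fibre space by crossing the opposite wall of the two-ray game on $X$ itself, landing on a second del Pezzo fibration over $\PP^1$. The $K$-condition you have just verified forbids this: since $-K_X\in\partial\MMob(X)$, the curves contracted at the final wall of the game are $K$-trivial, so the game leaves the Mori category rather than terminating in a Sarkisov link; in the paper's example it in fact breaks even earlier, at a $(1,1,-2,-2)$ anti-flip that replaces a smooth line by a line of non-isolated (hence non-terminal) singularities (Proposition~\ref{bad-link}). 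The paper's actual route around this is the step your plan is missing: one first passes from $X$ to a \emph{square} birational --- hence pliability-neutral --- smooth model $X^\prime$ via the fibrewise change $(u,v,x,t,y,z)\mapsto(u,v,u^4x,u^6t,u^3y,z)$, which is available precisely because $f$ was chosen in a special sublinear system $\mathcal{L}^\prime$ with coefficients divisible by prescribed powers of $u$; on $X^\prime$ the anticanonical class \emph{is} interior to the mobile cone, and its two-ray game then does produce a link, namely a divisorial contraction onto a quartic hypersurface in $\PP(1,1,1,1,2)$, a Fano $3$-fold of index $2$.

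This choice of target also repairs the second gap in your proposal. To conclude that $\PPP(X/\PP^1)$ has at least two elements you must rule out \emph{every} square birational equivalence between the two Mfs structures; checking that your particular link induces a non-biregular map on generic fibres only shows that that one map is not square. Because the paper's second model is a Fano, i.e.\ a Mori fibre space over a point, inequivalence is automatic: a square equivalence requires a birational map of the bases, and a point is not birational to $\PP^1$. Two smaller corrections: the counterexample refutes the ``if'' direction of Conjecture~\ref{conjecture-Grinenko} ($K$-condition implies rigidity), not the ``only if'' direction as you wrote; and your expectation that the cDV ($cE_6$) point ``stays Gorenstein terminal throughout the link'' is not what happens --- the natural game on $X$ degenerates exactly at that point, and the model $X^\prime$ carrying the genuine link is smooth.
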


It is generally believed that Grinenko's conjecture might hold if one only considers Gorenstein singularities. 

Note that, it is not natural to only consider the smooth case for $n=1,2$ as these varieties very often carry some orbifold singularities inherited from the ambient space, the non-Gorenstein points. For example a del Pezzo surface of degree $2$ is naturally embedded as a quartic hypersurface in the weighted projective space $\PP(1,1,1,2)$. It is natural that a family of these surfaces meets the singular point $1/2(1,1,1)$. See \cite{Ahm} for construction of models and the study of their birational structure. 

Grinenko also constructed many nontrivial (Gorenstein) examples, which supported his arguments. The study of quasi-smooth models of $dP_2\slash\PP^1$ in \cite{Ahm}, i.e. models that typically carry a quotient singularity, also gives evidence that the relation between birational rigidity and the position of $-K$ in the mobile cone is not affected by the presence of the non-Gorenstein point. Below in \S\ref{unstabledP2example} I give a counterexample to Conjecture~\ref{conjecture-Grinenko} for a Gorenstein singular degree $2$ del Pezzo fibration. 

On the other hand, in \cite{BCZ}, Example 4.4.4, it was shown that this conjecture does not hold in general for the degree $3$ case (of course in the singular case) and  suggested that one must consider the semi-stability condition on the $3$-fold $X$ in order to state an updated conjecture:

\begin{conj}[\cite{BCZ}, Conjecture~2.7] Let $X$ be a $dP_3\slash\PP^1$ which is semistable in the sense of Koll\'ar\cite{Kollar-stability}. Then $X$ is birationally rigid if it satisfies the $K$-condition.\end{conj}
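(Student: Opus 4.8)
Since this is stated as a conjecture, I will describe the natural line of attack, following the method of maximal singularities (Noether--Fano--Iskovskikh) that underlies Pukhlikov's \cite{Pu98b} and Grinenko's \cite{Grinenko, Gri} rigidity theorems. Suppose $X\slash\PP^1$ is birational, through a map $f$, to a Mori fibre space $Y\slash T$, and assume for contradiction that $f$ is not square. Let $\MM$ be the birational transform on $X$ of a very ample linear system on $Y$. Because $X\slash\PP^1$ has Picard rank two, with $\Pic(X)$ generated by $-K_X$ and the class $F$ of a fibre, we may write $\MM\subset|-nK_X+lF|$ for some $n>0$ and $l\in\Z$. The Noether--Fano--Iskovskikh inequality then forces the pair $(X,\tfrac1n\MM)$ to be non-canonical, i.e. there is a geometric valuation $E$ whose centre on $X$ is a maximal singularity of $\MM$.

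The next step is to classify the centre of such a maximal singularity and to rule out each case. A centre that dominates $\PP^1$ (a horizontal curve, or the generic fibre itself) is excluded just as in the surface-over-a-function-field situation: the generic fibre is a cubic surface, which is rigid, and the $K$-condition ($-K_X\notin\Int\MMob$) pins down the admissible values of $l$ relative to $n$ and forbids a mobile maximal singularity meeting the general fibre. This is essentially the content that Grinenko's theorem already secures in the smooth case. After this, the only surviving possibility is a \emph{vertical} centre: a point or curve lying inside a single, necessarily degenerate, fibre $X_t$, or the whole fibre $X_t$.

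The heart of the proof, and the step I expect to be the main obstacle, is to exclude maximal singularities supported on a special fibre, and this is precisely where Koll\'ar-semistability must be invoked. Semistability bounds how singular a fibre $X_t$ may be, and I would translate this into an upper bound on the discrepancy and on the multiplicity of $\MM$ along any vertical centre over $t$, measured against the intersection theory of $X_t$. The aim is an inequality contradicting the supposed maximality, closing the case of vertical centres and hence, via Noether--Fano--Iskovskikh, forcing $f$ to be square; thus $\PPP(X\slash\PP^1)$ is trivial and $X$ is rigid. The crucial sanity check is that this strategy is built to break exactly when semistability fails: the counterexample in \cite{BCZ}, Example~4.4.4, produces an over-singular fibre yielding a genuinely new birational link, so any correct argument must use the semistability bound nontrivially at this final step rather than as a formality.
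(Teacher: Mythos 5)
This statement is not a theorem of the paper and the paper contains no proof of it: it is quoted verbatim as an open conjecture (\cite{BCZ}, Conjecture~2.7), recorded only to frame the discussion. Indeed the paper's entire contribution runs in the opposite direction --- it constructs a counterexample to Grinenko's Conjecture~\ref{conjecture-Grinenko} in degree $2$, and explicitly remarks that no adequate notion of (semi)stability is even available in degrees $1$ and $2$ (citing \cite{Co00}, Problem~5.9.1) and that ``there has been no serious attempt in this direction.'' So there is no argument in the paper for you to match, and your text cannot be assessed as agreeing or disagreeing with it; it can only be assessed as a free-standing proof attempt, and as such it is not one.

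Concretely, the gap is that the decisive step is left entirely undone, by your own admission. The Noether--Fano--Iskovskikh setup, the reduction to classifying maximal centres, and the split into horizontal versus vertical centres are all standard scaffolding already present in \cite{Pu98b} and \cite{Grinenko, Gri}; the entire mathematical content of the conjecture lies in the step you defer --- translating Koll\'ar's semistability \cite{Kollar-stability} into an effective bound (on discrepancies over a degenerate fibre $X_t$, or on $\mult$ of $\MM$ along vertical centres) strong enough to contradict maximality. You state the ``aim'' of such an inequality but produce neither the inequality nor a mechanism for deriving it from semistability, and this is precisely where Example~4.4.4 of \cite{BCZ} shows the argument must be delicate: a non-semistable fibre really does produce a new Sarkisov link, so any bound you claim has to fail there sharply, not generically. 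Two smaller points: the horizontal case is not dispatched merely by rigidity of the generic cubic surface (Segre--Manin applies over $\C(t)$ only after one controls mobile maximal singularities via the $K$-condition, which in \cite{Pu98b} requires genuinely three-dimensional multiplicity estimates, not a fibrewise reduction); and in the singular semistable setting even the list of possible vertical centres --- points, curves, or valuations over singular points of a fibre --- needs a classification of divisorial contractions that your sketch does not address. In short: the statement remains a conjecture, and your proposal is a correct description of the expected strategy rather than a proof.
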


Although this type of (counter)examples are very difficult to produce, the expectation is that such example in degree $1$ is possible to be produced. On the other hand, a notion of (semi)stability for del Pezzo fibrations of degree $1$ and $2$ seems necessary (as already noted in \cite{Co00}  Problem~5.9.1), in order to state an improved version of Conjecture~\ref{conjecture-Grinenko}, and yet there has been no serious attempt in this direction.

\section{The counterexample}

The most natural construction of smooth $dP_2/\PP^1$ is the following, due to Grinenko \cite{Grinenko}.

Let $\mathcal{E}=\mathcal{O}\oplus\mathcal{O}(a)\oplus\mathcal{O}(b)$ be a rank $3$ vector bundle over $\PP^1$ for some positive integers $a,b$, and let $V=\Proj_{\PP^1}\mathcal{E}$. Denote the class of the tautological bundle on $V$ by $M$ and the class of a fibre by $L$ so that 
\[\Pic(V)=\Z[M]+\Z[L]\]
Assume $\sigma\colon X\rightarrow V$ is a double cover branched over a smooth divisor $R\sim 4M-2eL$, for some integer $e$. The natural projection $p\colon V\rightarrow\PP^1$ induces a morphism $\pi\colon X\rightarrow\PP^1$, such that the fibres are del Pezzo surfaces of degree $2$ embedded as quartic surfaces in $\PP(1,1,1,2)$. This $3$-fold $X$ can also be viewed as a hypersurface of a rank two toric variety. Let $T$ be a toric fourfold with Cox ring $\C[u,v,x,y,z,t]$, that is $\Z^2$-graded by
\begin{equation}\label{smooth-model}\left(\begin{array}{cccccc}
u&v&x&y&z&t\\
1&1&0&-a&-b&-e\\
0&0&1&1&1&2
\end{array}\right)\end{equation}
The $3$-fold $X$ is defined by the vanishing of a general polynomial of degree $(-2e,4)$. It is studied in \cite{Grinenko} for which values $a,b$ and $e$ this construction provides a Mfs, and then he studies their birational properties.
This construction can also be generalised to non-Gorenstein models \cite{Ahm}. As before, let $X$ be defined by the vanishing of a polynomial of degree $(-n,4)$ but change the grading on $T$ to
\begin{equation}\label{sing-model}\left(\begin{array}{cccccc}
u&v&x&y&z&t\\
1&1&-a&-b&-c&-d\\
0&0&1&1&1&2
\end{array}\right)\end{equation}
where $c$ and $d$ are positive integers. When $X$ is a Mfs, it is easy to check that $X$ is Gorenstein if and only if $e=2d$. See \cite{Ahm} for more details and construction. 

The cone of effective divisors modulo numerical equivalence on $T$ is generated by the toric principal divisors, associate to columns of the matrix above, and we have $\Eff(T)\subset\Q^2$. This cone decomposes, as a chamber, into a finite union of subcones 
\[\Eff(T)=\bigcup \Nef(T_i)\]
where $T_i$ are obtained by the Variation of Geometric Invariant Theory (VGIT) on the Cox ring of $T$. See \cite{cox} for an introduction to the GIT construction of toric varieties, and \cite{BZ} for a specific treatment of rank two models and connections to Sarkisov program via VGIT. In \cite{BZ} it was also shown how the toric 2-ray game on $T$ (over a point) can, in principle, be realised from the VGIT. I refer to \cite{Co00} for an explanation of the general theory of 2-ray game and Sarkisov program. In certain cases, when the del Pezzo fibration is a Mori dream space, i.e. it has a finitely generated Cox ring, its 2-ray game is realised by restricting the 2-ray game of the toric ambient space. In other words, in order to trace the Sarkisov link one runs the 2-ray game on $T$, restricts it to $X$ and checks whether the game remains in the Sarkisov category, in which case a winning game is obtained and a birational map to another Mfs is constructed. See \cite{Ahm, AZ, BZ, dP4, AK, BCZ} for explicit constructions of these models for del Pezzo fibrations or blow ups of Fano $3$-folds. I demonstrate this method in the following example, which also shows that Conjecture~\ref{conjecture-Grinenko} does not hold.

\subsection{Construction of the example}\label{unstabledP2example} Suppose $T$ is a toric variety with
the Cox ring $\Cox(T)=\C[u,v,x,t,y,z]$, grading given by the matrix
\[A=\left(\begin{array}{cccccc}
u&v&x&t&y&z\\
1&1&0&-2&-2&-4\\0&0&1&2&1&1\end{array}\right),\]
and let the irrelevant ideal $I=(u,v)\cap(x,y,z,t)$. In other words $T$ is the geometric quotient with character $\psi=(-1,-1)$.  Suppose $\mathcal{L}$ is the linear system of divisors of degree $(-4,4)$ in $T$; I use the notation $\mathcal{L}=|\mathcal{O}_T(-4,4)|$. This linear system is generated by monomials according to the following table
\[\begin{array}{c||c|c|c|c|c|c|c}
\deg \text{ of }u,v\text{ coefficient}&0&2&4&6&8&10&12\\
\hline
&&&&&&&\\
\text{fibre monomials}&x^3z&xy^3&yzt&xyz^2&xz^3&yz^3&z^4\\ 
&t^2&ty^2&xy^2z&tz^2&y^2z^2&&\\
&x^2y^2&x^2yz&y^4&y^3z&&&\\
&xyt&xzt&x^2z^2&&&&
\end{array}\]     

Let $g$ be a polynomial whose zero set defines a general divisor in $\mathcal{L}$ (I use the notation $g\in\mathcal{L}$). Then, for example, $x^3z$ is a monomial in $g$ with nonzero coefficient, and appearance of $y^4$ in the column indicated by $4$ means that $a(u,v)y^4$ is a part of $g$, for a homogeneous polynomial $a$ of degree $4$ in the variables $u$ and $v$, so that  $a(u,v)y^4$ has bidegree $(-4,4)$.
           
Now consider a sublinear system $\mathcal{L}'\subset\mathcal{L}$ with the property that $u^i$ divides the coefficient polynomials according to the table
\[\begin{array}{c||c|c|c|c|c|c|c|c|c}
\text{power of }u&1&2&3&4&5&6&8&9&12\\
\hline
&&&&&&&&&\\
\text{monomial}&x^2yz&xzt&yzt&x^2z^2&xyz^2&tz^2&xz^3&yz^3&z^4\\
&&xy^2z&y^3z&&&y^2z^2&&&
\end{array}\]
and general coefficients otherwise. And suppose $f\in\mathcal{L}'$ is general. For example $u^{12}z^4$ is a monomial in $f$ and no other monomial that includes $z^4$ can appear in $f$. Or, for instance, $xyz^2$ in the column indicated by $5$  carries a coefficient $u^5$; in other words, we can only have monomials of the form $\alpha u^6xyz^2$ or $\beta vu^5xyz^2$ in $f$, for $\alpha ,\beta\in\mathbb{C}$, and no other monomial with $xyz^2$ can appear in $f$. Denote by $X$ the hypersurface in $T$ defined by the zero locus of $f$.\\

\paragraph{\bf Argumentation overview.} Note that the $3$-fold $X$ has a fibration over $\PP^1$ with degree $2$ del Pezzo surfaces as fibres.  In the remainder of this section, I check that it is a Mori fibre space and then I show that the natural 2-ray game on $X$ goes out of the Mori catergory.  This is by explicit construction of the game. It is verified, from the construction of the game, that $-K_X\notin\Int\MMob(X)$. Hence the conditions of Conjecture~\ref{conjecture-Grinenko} are satisfied for $X$. In Section~\ref{new-model}, a new (square) birational model to $X$ is constructed, for which the anticanonical divisor is interior in the mobile cone. Then I show that it admits a birational map to a Fano $3$-fold, and hence it is not birationally rigid.

\begin{lm}\label{sing}  The hypersurface $X\subset T$ is singular. In particular $\sing(X)=\{p\}$, where $p=(0:1:0:0:0:1)$. Moreover, the germ at this point is of type $cE_6$, and $X$ is terminal.
\end{lm}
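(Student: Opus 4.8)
The plan is to put $X$ into the smooth locus of $T$, reduce singularity detection to the Jacobian/base-locus criterion, and then read off the analytic type from a weighted-homogeneous structure at $p$. First I would locate $\Sing(T)$: running through the eight affine charts $\{a\neq0,b\neq0\}$ with $a\in\{u,v\}$, $b\in\{x,t,y,z\}$, the $2\times 2$ minors of $A$ on the two chosen columns equal $\pm1$ except for the pairs involving $t$, where the minor is $2$; a short computation shows the residual $\Z/2$ there acts by $(x,y,z)\mapsto(-x,-y,-z)$, so $\Sing(T)$ is the curve $\Gamma=\{x=y=z=0\}\cong\PP^1$ of $\tfrac12(1,1,1)$-points. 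Since $t^2$ appears in $f$ with nonzero coefficient while every other monomial of $\mathcal{L}$ involves one of $x,y,z$, we get $f|_\Gamma=(\mathrm{const})\,t^2\neq0$, hence $X\cap\Gamma=\varnothing$. Thus $X$ sits in the smooth part of $T$, where $[q]\in\Sing(X)$ iff all six partials of $f$ vanish at $q$; equivalently, by Bertini, $X$ is smooth away from $\Bs\mathcal{L}'$.

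Next I would compute $\Bs\mathcal{L}'$ as the common zero locus of the tabulated monomials, and this is where the non-generic choice of $\mathcal{L}'$ does its work. The monomial $t^2$ forces $t=0$, and since no monomial of $\mathcal{L}'$ is a power of $x$ alone, the whole curve $\Lambda=\{t=y=z=0\}\cong\PP^1$ lies in the base locus. In the region $z\neq0$ the only pure $z$-monomial is $z^4$, which the definition of $\mathcal{L}'$ forces to carry the factor $u^{12}$; together with $x^3z$ and the general $y^4$-term this pins the base locus there down to the single point $p$. Hence $\Bs\mathcal{L}'=\Lambda\cup\{p\}$. (In the full system $\mathcal{L}$ the coefficient of $z^4$ is a nonzero constant, so $p$ is not a base point and a general member is smooth there; the $u^{12}$-constraint is exactly what creates the singularity.)

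With $\Sing(X)\subseteq\Lambda\cup\{p\}$ in hand, I would check $X$ along each piece. Since $\Lambda\subset\Bs\mathcal{L}'$ we have $\Lambda\subset X$, and in the smooth chart $\{v=x=1\}$ (minor $1$) with coordinates $(u,t,y,z)$ every monomial vanishes to order $\geq1$ along $\{t=y=z=0\}$, while $\partial_z f=a_1x^3=a_1\neq0$ there; so $\nabla f\neq0$ and $X$ is smooth all along $\Lambda$. At $p$ I pass to the smooth chart $\{v=z=1\}$ (minor $1$) with coordinates $(u,x,t,y)$, where the local equation $\phi$ has no constant or linear term, so $p\in\Sing(X)$. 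This gives $\Sing(X)=\{p\}$.

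Finally, for the analytic type, on $\{v=z=1\}$ I assign weights $(w_u,w_x,w_t,w_y)=(1,4,6,3)$. Checking against the monomial tables, every monomial of $\phi$ has weight $\geq12$; the weight-$12$ part consists of exactly sixteen monomials, including the pure powers $t^2,x^3,y^4,u^{12}$, so $\phi$ is semi-quasi-homogeneous with quasi-smooth principal part. A general hyperplane section amounts to substituting for $u$ a general linear form, whose lowest-weight part is a multiple of $y$; only the $u$-free weight-$12$ monomials survive at weight $12$, leaving $t^2+x^3+ty^2+y^4$, which after completing the square in $t$ becomes $\tau^2+x^3+\tfrac34y^4$, i.e. an $E_6$ singularity. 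By semi-quasi-homogeneous determinacy the section is analytically $E_6$, so $p$ is a $cE_6$ point; as $\Sing(X)=\{p\}$ is then an isolated $cDV$ point, $X$ is Gorenstein terminal by Reid's description of Gorenstein terminal $3$-fold singularities. I expect the main obstacle to be the honest determination of $\Bs\mathcal{L}'$ together with the verification that $X$ remains smooth along the residual base curve $\Lambda$; once these are settled the $cE_6$ identification is a short weighted-homogeneity argument.
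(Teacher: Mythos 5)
Your proposal is correct and follows essentially the same route as the paper: Bertini reduction to $\Bs(\mathcal{L}')$, the identical computation $\Bs(\mathcal{L}')=(y=t=z=0)\cup\{p\}$ with smoothness along the residual line forced by the monomial $x^3z$, and the same smooth local chart $\{v\neq 0,\,z\neq 0\}$ at $p$ leading to the recognition of the germ $\t^2+\x^3+\y^4+\cdots$ as $cE_6$, hence terminal. Your two refinements --- verifying $X\cap\Sing(T)=\varnothing$ along the curve of $\tfrac12(1,1,1)$ points (which the paper's bare appeal to Bertini tacitly requires) and replacing the paper's informal ``after some analytic changes'' by the semi-quasihomogeneous determinacy argument with weights $(1,4,6,3)$ and the general hyperplane section $\t^2+\x^3+\t\y^2+\y^4\sim E_6$ --- only make the same argument more watertight.
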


\begin{proof} By Bertini theorem $\sing(X)\subset\Bs(\mathcal{L}')$. Appearance of $y^4$ in $\mathcal{L}'$ with general coefficients of degree $4$ in $u$ and $v$ implies that this base locus is contained in the loci $(y=0)$ or $(u=v=0)$. However, $(u=v=0)$ is not permitted as $(u,v)$ is a component of the irrelevant ideal. Hence we take $(y=0)$. Also $t^2\in\mathcal{L}'$ implies $\Bs(\mathcal{L}'\subset (y=t=0)$.The remaining monomials are $x^3z$, $u^4x^2z^2,u^8xz^3$ and $u^{12}z^4$, where $b$ is a general quartic. In fact, appearance of the first monomial, i.e. $x^3z$, implies that $x=0$ or $z=0$. And $u^{12}z^4\in\mathcal{L}'$ implies $z=0$ or $u=0$. Therefore
\[\Bs(\mathcal{L}')=(u=x=y=t=0)\cup(y=t=z=0).\]

The part $(y=t=z=0)$ is the line $(u:v;1:0:0:0)$, which is smooth because of the appearance of the monomial $x^3z$ in $f$. And the rest is exactly the point $p$.

Note that near the point $p\in T$, I can set $v\neq 0$ and $z\neq 0$, to realise the local isomorphism to $\mathbb{C}^4$, which is

\[\Spec\mathbb{C}[u,v,x,y,z,t,\frac{1}{v},\frac{1}{z}]^{\mathbb{C}^*\times\mathbb{C}^*}=\Spec\mathbb{C}[\frac{u}{v},\frac{x}{v^4z},\frac{t}{v^6z^2},\frac{y}{v^2z}]\]
I denote the new local coordinates by $\u,\x,\y,\t$. Now looking at $f$, in this local chart, we observe that the only quadratic part is $\t^2$, and the cubic part is $\x^3$. The variable $\y$ appears in degree $4$, after some completing squares with $\t$ and $\x$ and $\u$ has higher order. In particular, after some analytic changes we have
\[f_\text{loc}\equiv\t^2+\x^3+\y^4+\u.(\text{higher order terms})\]
which is a $cE_6$ singularity.

\end{proof}

In the following lemma, I study the 2-ray game played on $X\slash\{\text{pt}\}$. This will be used to figure out the shape of $\Mob(X)$ and the position of $-K_X$ against it.

\begin{lm}\label{2-ray} The 2-ray game of $T$ restricts to a game on $X$.\end{lm}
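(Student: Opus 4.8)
The plan is to realise the 2-ray game of $T$ explicitly through the VGIT chamber structure of $\Eff(T)$, and then to check step by step that restricting each toric operation to $X$ produces a well-defined move on the proper transform of $X$. First I would read off the rays of $\Eff(T)\subset\Q^2$ from the columns of $A$: the divisor classes are $(1,0)$ (from $u,v$), $(0,1)$ (from $x$), $(-2,2)$ (from $t$), $(-2,1)$ (from $y$) and $(-4,1)$ (from $z$). Thus $\Eff(T)=\operatorname{Cone}\big((1,0),(-4,1)\big)$ is subdivided by the three interior walls $(0,1)$, $(-2,2)$, $(-2,1)$ into four nef chambers $\Nef(T_1),\dots,\Nef(T_4)$, where $T_1=T$ carries the ample chamber $\operatorname{Cone}\big((1,0),(0,1)\big)$, as one checks from the irrelevant ideal $I=(u,v)\cap(x,y,z,t)$. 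The boundary ray $(1,0)$ is the pullback of the ample class on the base and gives the del Pezzo fibration $X\to\PP^1$, while the opposite boundary ray $(-4,1)$ (from $z$) gives the contraction at the other end of the game. Following \cite{BZ}, crossing each interior wall is a toric small modification determined by the VGIT, so the 2-ray game on $T$ is the chain $T_1\dashrightarrow T_2\dashrightarrow T_3\dashrightarrow T_4$ followed by the two end contractions; note also that $-K_T=(-6,6)$, and hence $-K_X=(-2,2)|_X$, lies on the wall generated by $t$.

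Next I would show the game restricts to $X$. For each wall I identify the toric stratum $\Gamma_i$ that is contracted or extracted by the corresponding small modification, and I compute $\Gamma_i\cap X_i$ from the monomials of $f$. The decisive feature is the first wall $(0,1)$, whose modification acts on the stratum $\{t=y=z=0\}$; by Lemma~\ref{sing} this is exactly the line $C_0=(u:v;1:0:0:0)$, and since every monomial of $f$ contains one of $y,z,t$ we have $f|_{C_0}=0$, so $C_0\subset X$ and the restricted map is a genuine small modification of $X$ along a curve contained in $X$, rather than an isomorphism. For the remaining walls I would pass to the affine charts of the $T_i$ (as in the local computation of Lemma~\ref{sing}), verify that the proper transform $X_{i+1}$ is cut out by the transform of $f$, and check that $X_i$ does not contain the flipping locus along any component beyond those already accounted for, so that each $X_i\dashrightarrow X_{i+1}$ is an isomorphism in codimension one with the expected $\Q$-factorial proper transform. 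The prescribed divisibility of the coefficients of $f$ by powers of $u$, i.e. the passage from $\mathcal{L}$ to $\mathcal{L}'$, is precisely what forces these intersection loci to have the expected dimension and makes the toric moves restrict as claimed.

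The main obstacle is controlling the proper transform of $X$ across the $cE_6$ point $p=(0:1:0:0:0:1)$ of Lemma~\ref{sing}. Since $p$ lies on the stratum $\{u=x=y=t=0\}$, which persists until the $z$-end of the game, I must track it through the successive modifications and confirm that the restricted operations keep $X$ terminal and $\Q$-factorial at each stage, so that the chain genuinely remains a 2-ray game in the Sarkisov category. The book-keeping of which monomials of $f$ survive in each chart, together with the analytic normal form $\t^2+\x^3+\y^4+\u\cdot(\text{higher order})$ obtained in Lemma~\ref{sing}, is what makes this tracking feasible. Once it is carried out, reading off the endpoints of the game determines the shape of $\Mob(X)$ and the position of $-K_X$ relative to $\Int\MMob(X)$, which is the use to which this lemma is subsequently put.
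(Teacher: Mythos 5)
Your identification of the ample chamber, of the first wall, and the observation that the line $C_0=(t=y=z=0)$ lies on $X$ (every monomial of $f$ involves $y$, $z$ or $t$) all agree with the paper, but the toric skeleton you build the proof on is wrong. There are indeed four GIT chambers, yet only the first two interior walls, $(0,1)$ and $(-1,1)$ (the ray of $t$'s degree $(-2,2)$), are crossed by small modifications. At the third wall, the ray $(-2,1)$ spanned by the degree of $y$, the divisor $(z=0)$ becomes unstable: for any character strictly inside $\convex\left<(-2,1),(-4,1)\right>$ the quotient is $\PP(1,1,2,4,6)$, of Picard rank one. So the game is $T\dashrightarrow T_1\dashrightarrow T_2\rightarrow\PP(1,1,2,4,6)$, with exactly two small modifications followed by the divisorial contraction $\Psi$ of $(z=0)$; there is no chain $T_1\dashrightarrow T_2\dashrightarrow T_3\dashrightarrow T_4$ of three flips and no end contraction ``at the ray $(-4,1)$''. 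Your plan would therefore have you restricting a third small modification that does not exist. The side computation is also off: the columns of $A$ sum to $(-6,5)$, not $(-6,6)$, so by adjunction $-K_X\sim\mathcal{O}(-2,1)$, which lies on the ray of $y$, i.e.\ on the boundary of $\MMob(X)$; your value $(-2,2)$ would place $-K_X$ in the interior of the mobile cone, so $X$ would fail the $K$-condition and the whole counterexample would lose its point.

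Second, the steps that carry the actual content of the lemma are deferred (``I would pass to the affine charts \dots verify \dots check'') rather than performed, and your stated success criterion is the wrong one. The paper's proof consists of three concrete verifications: near the first flipping locus one sets $x=1$, and the monomial $x^3z$ makes $z$ appear linearly, so $z$ is analytically eliminated and the toric $(1,1,-2,-2,-4)$ anti-flip restricts to a $(1,1,-2,-2)$ anti-flip on $X$, contracting a copy of $\PP^1$ and extracting a copy of $\PP(2,2)\cong\PP^1$; the term $t^2\in f$ (nonzero constant coefficient) shows $X_1$ is disjoint from the second flipping locus $(y=z=0)$, on which $t\neq 0$, so $\sigma_1$ restricts to an isomorphism; and $\Psi$ restricts to a divisorial contraction. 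None of these appears in your proposal. Moreover, you propose to ``confirm that the restricted operations keep $X$ terminal and $\Q$-factorial at each stage, so that the chain genuinely remains a 2-ray game in the Sarkisov category'': that confirmation is impossible, since the restricted anti-flip extracts a line that is singular at every point, and Proposition~\ref{bad-link} shows precisely that the game leaves the Mori category --- the lemma asserts only that the toric game restricts to a game on $X$, not that the restricted game is a Sarkisov link. Finally, the divisibility conditions defining $\mathcal{L}'$ are not what makes the game restrict: the decisive monomials $x^3z$ and $t^2$ carry constant coefficients already for general members of $\mathcal{L}$; the passage to $\mathcal{L}'$ is what creates the $cE_6$ point of Lemma~\ref{sing} and enables the fibrewise transform of Section~\ref{new-model}. (And since $z\neq 0$ at $p$ while both flipping loci lie in $(z=0)$, no tracking of $p$ through the small modifications is needed.)
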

\begin{proof}The 2-ray game on $T$ goes as follows
\[\xymatrixcolsep{2pc}\xymatrixrowsep{3pc}
\xymatrix{
&T\ar_\Phi[ld]\ar^{f_0}[rd]\ar^{\sigma_0}@{-->}[rr]&&T_1\ar_{g_0}[ld]\ar^{f_1}[rd]\ar^{\sigma_1}@{-->}[rr]&&T_2\ar_{g_1}[ld]\ar^\Psi[rd]&\\
\PP^1&&\mathfrak{T}_0&&\mathfrak{T}_1&&\PP^\prime
}\]
The varieties in this diagram are obtained as follows. The GIT chamber of $T$ is indicated by the matrix $A$ and it is 
\[\xygraph{
!{(0,0) }="a"
!{(1.7,0) }*+{\scriptstyle{(1,0)}}="b"
!{(0,1.6) }*+{\scriptstyle{(0,1)}}="c"
!{(-0.9,1.5) }*+{\scriptstyle{(-1,1)}}="d"
!{(-1.7,1.2) }*+{\scriptstyle{(-2,1)}}="e"
!{(-2.5,0.8) }*+{\scriptstyle{(-4,1)}}="f"
"a"-"b"  "a"-"c" "a"-"d" "a"-"e" "a"-"f"
} \]
The ample cone of the variety $T$ is the interior of the cone \[\convex\left<(1,0),(0,1)\right>\]
and $T_1$ corresponds to the interior of the cone \[\convex\left<(0,1),(-1,1)\right>,\]
i.e., this cone is the nef cone of $T_1$. In other words $T_1$ is a toric variety with the same coordinate ring and grading as $T$
but with irrelevant ideal $I_1=(u,v,x)\cap(t,y,z)$. Similarly $T_2$ corresponds to the cone generated by rays $(-1,1)$ and $(-2,1)$, i.e., the irrelevant ideal of $T_2$ is $I_2=(u,v,x,t)\cap(y,z)$. On the other hand, the four varieties in the second row of the diagram correspond to the $1$-dimensional rays in the chamber. The projective line $\PP^1$ corresponds to the ray generated by $(1,0)$; monomial of degree $(n,0)$ form the graded ring $\C[u,v]$. Similarly $\mathfrak{T}_0$ correspond to the ray generated by $(0,1)$. In other words,
$\mathfrak{T}_0$ is
\[\Proj\bigoplus_{n\geq 1}H^0(T,\mathcal{O}_T(0,n))=\Proj\C[x, u^2t, uvt,v^2t, u^2y,\dots,v^4z],\]
which is embedded in $\PP(1^9,2^3)$ via the relations among the monomials above.
The varieties $\mathfrak{T}_1$ and $\PP^\prime$ can also be explicitly computed in this way. In particular, $\PP^\prime=\PP(1,1,2,4,6)$.

For the maps in the diagram we have that
\begin{enumerate}
\item $\Phi\colon T\rightarrow \PP^1$ is the natural fibration.
\item $f_0\colon T\rightarrow\mathfrak{T}_0$ is given in coordinate by
\[(u,v,x,t,y,z)\in T\longmapsto(x,u^2,uvt,v^2t,u^2y,\dots,v^4z)\in\mathfrak{T}_0\subset\PP(1^9,2^3)\]
It is rather easy to check that away from $p=(1:0:\dots:0)\in\mathfrak{T}_0$ the map $f_0$ is one-to-one. The pre-image of this point under $f_0$ is the set $(u=v=0)\cup(t=y=z=0)$. But $(u=v=0)$ corresponds to a component of the irrelevant ideal, and hence it is empty on $T$. This implies that the line $(t=y=z=0)\cong\PP^1\subset T$ is contracted to a point via $f_0$. In particular, $\mathfrak{T}_0$ is not $\Q$-factorial. Similarly $g_0\colon T_1\rightarrow\mathfrak{T}_0$ is the contraction of the surface $(u=v=0)$, to the same point in $\mathfrak{T}_0$. In particular, $\sigma_0$ is an isomorphism in codimension~$1$. 

Let us have a look at the local description of these maps. In $\mathfrak{T}_0$ consider the open set given by $x\neq 0$, a neighbourhood of $p$. This affine space is (with an abuse of notation for local coordinates)
\[\Spec\C[u^2t, uvt,v^2t, u^2y,\dots,v^4z].\]
And at the level of $T$ (respectively $T_1$) using $\{x\neq0\}$ I can get rid of the second grading (corresponding to the second row of the matrix $A$) and obtain a quasi-projective variety which is the quotient of $\C^5-\{u=v=0\}$ (respectively $\C^5-\{t=y=z=0\}$) by an action of $\C^*$ by 
\[(\lambda;(u,v,t,y,z)\mapsto(\lambda u,\lambda v,\lambda^{-2}t,\lambda^{-2}y,\lambda^-4z)\]
I denote this by $(1,1,-2,-2,-4)$ anti-flip, following \cite{gavin, miles-flip}. See these references for an explanation of this construction and notation. This all means that under $\sigma_0$ a copy of $\PP^1$ is replaces by a copy of $\PP(2,2,4)$. 
\item Similarly, $\sigma_1$ is an isomorphism in codimension $1$. Note that the action of $(\C^*)^2$ by $A$ is invariant under $\Gl(2,\Z)$ action on $A$, so that multiplying it from the left by 
\[\left(\begin{array}{cc}
1&1\\
0&1
\end{array}\right)\]
and setting $t=1$, as in the previous case, shows that this map is of type $(1,1,1,-1,-3)$. So $f_1$ contracts a copy of $\PP^2$ and $g_1$ extracts a copy of $\PP(1,3)$. 
\item As noted before, the irrelevant ideal of $T_2$ is $I_2=(u,v,x,t)\cap(y,z)$, and $\Psi\colon T_2\rightarrow\PP^\prime=\PP(1,1,2,4,6)$ is the contraction of the divisor $E=(z=0)$ to a point in $\PP^\prime$. This map, similar to $\Phi$, is given by 
\[\bigoplus_{n\geq 1}H^0(T,\mathcal{O}_T(0,n))\]
where the degrees are now considered in the transformation of $A$ by the action of
\[\left(\begin{array}{cc}
1&2\\
0&1
\end{array}\right)\]
from the left.
\end{enumerate}

As already noted $\Phi$ restricted to $X$ is just the degree 2 del Pezzo fibration. The restriction of $f_0$ contracts the same line, to the same point on a subvariety of $\mathfrak{T}_0$. Once we set $x=1$, a linear form ``$z$'' appears in $f$, hence locally in the neighbourhood where the flip is happening one can eliminate this variable. Therefore, $\sigma_0$ restricts to $(1,1,-2,-2)$, which means a copy of $\PP^1$ is contracted to a point and a copy of $\PP(2,2)\cong\PP^1$ is extracted. The restriction of $\sigma_1$ to the $3$-fold $X_1$, i.e., the image of $X$ under the anti-flip, is an isomorphism. This is because $t^2$ is a term in $f$, and hence the toric flip happens away from the $3$-fold. Finally $\Psi$ restricts to a divisorial contraction to $X_1$.
\end{proof}

\begin{rmk} Note that it follows from some delicate version of Lefschetz principle that $\rank\Pic(X)=2$. In fact, $X$ is defined by a linear system of bi-degree $(-4,4)$, and does not belong to the nef cone of $T$, which is generated by $(1,0)$ and $(0,1)$. However, it is in the interior of the mobile cone of $T$. In particular, it is nef and big on both $T_1$ and $T_2$, which are isomorphic to $T$ in codimension $1$. Hence we have that
\[\Pic(X)\cong\Pic(T_1)\cong\Pic(T)\cong\Z^2\]
These isomorphisms follow a singular version of Lefschetz hyperplane theorem as in \cite[\S2.2]{stratified}, and in this particular case it holds because $(-4,4)$ represents an interior point in the cone of mobile divisors on $T$.
A more detailed argument for this can be found in \cite[\S4.3]{Ahm} or \cite[Proof of Proposition 32]{BCZ}. Also note that $X$ and $X_1$ are isomorphic, so it does not really matter which one to consider for these arguments.
\end{rmk}

\begin{prop}\label{bad-link} The 2-ray game on $X$ obtained in Lemma~\ref{2-ray} does not provide a new Mfs model of $X$.
\end{prop}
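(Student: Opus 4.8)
The plan is to run the 2-ray game of Lemma~\ref{2-ray} to its end and to show that the only variety it could offer as a new Mori fibre space, the target of the final divisorial contraction, fails to be terminal. By Lemma~\ref{2-ray} the game on $X$ is the anti-flip $\sigma_0|_X$, followed by the isomorphism $\sigma_1|_X$, followed by a divisorial contraction $\Psi|_X\colon X_1\rightarrow Y$ contracting the divisor $E=(z=0)$ to a point. Since $\sigma_0|_X$ and $\sigma_1|_X$ are isomorphisms in codimension one, the only possible new Mori fibre space is $Y$, so I would reduce the proposition to showing that $Y$ is not a Mori fibre space.

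First I would locate $-K_X$ in $\Pic(X)\cong\Z^2$. Adjunction on $T$ gives $-K_T=\sum_i D_i=(-6,5)$, and as $X$ is cut out by a form of bidegree $(-4,4)$,
\[-K_X=-\bigl(K_T+X\bigr)\big|_X=(-2,1).\]
Next I would read the walls off Lemma~\ref{2-ray}. Because $\sigma_1|_X$ degenerates to an isomorphism on $X$, the wall at $(-1,1)$ no longer subdivides the movable cone, the two chambers merge, and
\[\MMob(X)=\convex\langle(1,0),(-2,1)\rangle,\]
where the outer ray $(-2,1)$ is exactly the $\Psi|_X$-trivial class, i.e.\ the boundary of $\Nef(T_2)$ along which $\Psi$ contracts $E$. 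In particular $-K_X=(-2,1)$ lies on the boundary of $\MMob(X)$, which already records that $X$ satisfies the $K$-condition.

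The decisive step is then the coincidence $-K_X=(-2,1)$ with the contraction ray: it says that $-K_X$ is numerically $\Psi|_X$-trivial. Writing $K_{X_1}=(\Psi|_X)^{*}K_Y+a(E,Y)\,E$ and intersecting with a curve $C$ contracted by $\Psi|_X$ gives $0=K_{X_1}\cdot C=a(E,Y)\,(E\cdot C)$; since $E\cdot C<0$ for a divisor contracted to a point, I conclude that the discrepancy $a(E,Y)$ vanishes. Hence $\Psi|_X$ is a crepant divisorial contraction, $Y$ carries a canonical but non-terminal singularity at the image of $E$, and so $Y$ is not a Mori fibre space. As the remaining maps of the game are isomorphisms in codimension one — and $\sigma_0|_X$ is moreover an anti-flip, which already signals that the game has left the Mori category — no new Mori fibre space is produced, proving the claim.

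I expect the main obstacle to be the bookkeeping that places $-K_X$ exactly on the boundary ray of $\MMob(X)$: one has to use precisely that $\sigma_1|_X$ collapses to an isomorphism on $X$, so that the intermediate wall disappears and the final contraction of the game occurs at the ray $(-2,1)=-K_X$. Converting this numerical coincidence into the discrepancy-zero statement, and thereby ruling out terminality of $Y$, is what closes the argument.
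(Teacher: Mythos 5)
Your main line of argument is correct and proves the proposition, and it is a genuine elaboration of one of the two reasons the paper gives rather than a different approach. The paper's proof is short and offers two independent failures: first, the anti-flip $(1,1,-2,-2)$ replaces a smooth line by a copy of $\PP(2,2)\cong\PP^1$ every point of which is singular on the $3$-fold $X_1$; since terminal $3$-fold singularities are isolated, the game exits the Mori category already at the first wall. Second, $-K_X\in\partial\MMob(X)$, so the curves contracted by the final map are $K$-trivial, violating the rules of Mori theory. You develop the second reason further than the paper does: converting the numerical coincidence $-K_X=(-2,1)$ into the statement $a(E,Y)=0$, so that $Y$ is crepant over $X_1$ and hence canonical but not terminal, is a legitimate strengthening — the paper stops at ``contracted curves are trivial against the canonical divisor.'' Your chamber bookkeeping (the wall at $(-1,1)$ disappearing because $\sigma_1$ restricts to an isomorphism on $X$, so the contraction ray of $\Psi|_X$ is exactly $(-2,1)$) matches the paper's remark, and your reduction to the endpoint $Y$ is sound since the small modifications cannot produce a new Mfs.

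Two flags. The aside that $\sigma_0|_X$ being an anti-flip ``already signals that the game has left the Mori category'' is wrong as a general principle: Sarkisov 2-ray games may legitimately contain anti-flips, provided the modified variety stays terminal and $\Q$-factorial. What disqualifies this particular anti-flip — and this is the paper's first and primary reason, absent from your write-up — is that the extracted $\PP(2,2)$ is a curve of singular points of $X_1$, i.e.\ a non-isolated singular locus, so $X_1$ is not terminal. Secondly, your discrepancy formula $K_{X_1}=(\Psi|_X)^{*}K_Y+a(E,Y)E$ presupposes that $K_Y$ is $\Q$-Cartier; this is harmless here, since if it failed then $Y$ could not be a $\Q$-factorial terminal Mfs in the first place, but the dichotomy should be stated to make the argument airtight.
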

\begin{proof} This is quite clear now. The first reason for the failure of the game is the anti-flip $(1,1,-2,-2)$. As mentioned before, in this anti-flip a copy of $\PP^1$ is contracted to a point and on the other side of the anti-flip a copy of $\PP(2,2)\cong\PP^1$ is extracted. In particular, the map replaces a smooth line by a line that carries singularities at each point of it. Note that the line itself is smooth (isomorphic to the projective line) but on the $3$-fold it is singular (at each point). As terminal singularities are isolated, this game goes out of the Mori category. Another reason for the failure is that $-K_X\in\partial\MMob(X)$, as explained below. This means that in the last map of the 2-ray game of $X$ the contracted curves are trivial against the canonical divisor, hence not fulfilling the rules of Mori theory.\end{proof}

\begin{rmk} It is also a good point here to observe that the anti-canonical class of $X$ has degree $(-2,1)$. This can be seen as follows. The variety $T$ is toric, hence its anticanonical divisor is given by the sum of the toric principal divisors, in particular it has degree $(-6,5)$, note that I am still working with the matrix $A$ and this bi-degree is nothing but the sum of the columns of $A$.
By adjunction formula $K_X=(K_T+X)_{|_X}$, which implies that $-K_X\sim\mathcal{O}(-2,1)$. On the other hand, the fact that the 2-ray game on $X$ is essentially the restriction of the 2-ray game on $T$ implies that $\MMob(X)$ and $\MMob(T)$, as convex cones in $\Q^2$, have the same boundaries. In fact, the decomposition of $\MMob(X)$ into the union of nef cones is a sub-decomposition of $\MMob(T)$. In particular
\[\MMob(X)=\convex\left<(1,0),(-2,1)\right>,\]
which implies 
\[-K_X\in\partial\MMob(X).\]
And this shows that $X$ satisfies conditions of Conjecture~\ref{conjecture-Grinenko}.
\end{rmk}

\section{The Fano variety birational to $X$}\label{new-model}

Now consider the fibrewise transform $T\rightarrow\F$, given by
\[(u,v,x,t,y,z)\mapsto(u,v,u^4x,u^6t,u^3y,z)\]
where $\F$ is a toric variety with Cox ring $\Cox(\F)=\C[u,v,x,t,z,y]$, with irrelevant ideal $I_\F=(u,v)\cap(x,y,z,t)$, and the grading
\[A^\prime=\left(\begin{array}{cccccc}
u&v&x&t&z&y\\
1&1&0&0&0&-1\\0&0&1&2&1&1\end{array}\right)\]
Denote by $X^\prime$ the birational transform of $X$ under this map.
And suppose $g$ is the defining equation of $X^\prime$. The polynomial $g$ can be realised from $f$ by substituting $x,y,z,t$ by $u^4x,u^3y,z,u^6t$, and then cancelling out $u^{12}$ from it. In particular it is of the form
\[g=t^2+x^3z+z^4+x^2y^2+a_4(u,v)y^4+\{\text{ other terms}\}\]
In fact the full table of monomials appearing in $g$ is given by

\[\begin{array}{c||c|c|c|c|c}
\deg \text{ of }u,v\text{ coefficient}&0&1&2&3&4\\
\hline
&&&&&\\
\text{fibre monomials}
&x^3z&x^2yz&xy^2z&y^3z&y^4\\
&x^2z^2&xyz^2&y^2z^2&&\\
&xz^3&yz^3&y^2t&&\\
&z^4&yzt&uxy^3&&\\
&t^2&&&&\\
&xzt&&&&\\
&z^2t&&&&\\
&u^2x^2y^2&&&&\\
&uxyt&&&&\\
\end{array}\]     

Note that this table does not generate a general member in the linear system $|\mathcal{O}_\mathbb{F}(0,4)|$, and the missing monomials are $x^4,tx^2$ and $x^3y$. Also $x^2y^2$, $xyt$ and $xy^3$ do not have general coefficients in $u,v$, as specified in the table above.

\begin{theorem} The $3$-fold $X^\prime$ is smooth and it is birational to a Fano $3$-fold. In particular, $X^\prime$, and hence $X$, are not birationally rigid.
\end{theorem}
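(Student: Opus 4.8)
The plan is to apply the Jacobian criterion to $g$ on $\F$, the whole point of the fibrewise transform being that it moves the $cE_6$ point of Lemma~\ref{sing} off the hypersurface: under $(u,v,x,t,y,z)\mapsto(u,v,u^4x,u^6t,u^3y,z)$ the image of $p$ acquires a nonzero $z$-coordinate, and since $z^4$ is a monomial of $g$ this image does not lie on $X'$. First I would record that $\F$ is smooth along $X'$: the only singular stratum of $\F$ is the cyclic quotient locus coming from the weight-$2$ variable $t$, namely $(x=z=y=0)$, and because $t^2$ is a monomial of $g$ the hypersurface $X'$ misses this locus. On the smooth part of $\F$ it then suffices to check that the partial derivatives of $g$ do not vanish simultaneously on $X'$ modulo the two Euler relations of the $(\C^*)^2$-action. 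Here I would exploit the distinguished monomials $t^2,\ x^3z,\ z^4$ and $a_4(u,v)y^4$, together with the mixed terms listed in the table for $g$: in each toric chart one of the corresponding partials is forced to be nonzero on $X'$. This is the step that contrasts $X'$ with $X$.

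\textbf{The birational map to a Fano.} By adjunction $-K_{X'}=-(K_\F+X')|_{X'}$; since $-K_\F$ is the sum of the columns of $A'$, of class $(1,5)$, and $X'\sim\mathcal{O}_\F(0,4)$, one gets $-K_{X'}\sim\mathcal{O}(1,1)$, an interior point of $\MMob(X')=\convex\langle(1,0),(-1,1)\rangle$. I would then run the toric $2$-ray game on $\F$, as in Lemma~\ref{2-ray}. The effective cone $\Eff(\F)=\convex\langle(1,0),(-1,1)\rangle$ has a single interior ray $(0,1)$, carrying $x,z,t$, so this game has no flips: besides the fibration $\F\to\PP^1$ on the ray $(1,0)$, the only other contraction is a divisorial one $g_0\colon\F\to\PP(1,1,1,1,2)$ on the ray $(0,1)$, contracting the divisor $(y=0)$, the variable $y$ being the unique one strictly on the far side of the wall; explicitly $g_0$ is given by $[x:z:uy:vy:t]$. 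The key algebraic observation when restricting $g_0$ to $X'$ is that every monomial of $g$ has equal $(u,v)$-degree and $y$-degree, since the class $(0,4)$ forces $a+b=f$ in $u^av^bx^ct^dz^ey^f$. Hence the substitution $X_1=x,\ X_2=z,\ X_3=uy,\ X_4=vy,\ X_5=t$ turns $g$ into a weighted quartic $G(X_1,\dots,X_5)$ and identifies $Y:=g_0(X')$ with the hypersurface $(G=0)\subset\PP(1,1,1,1,2)$, while $g_0|_{X'}$ is birational onto $Y$ because it is an isomorphism away from $(y=0)$. A further adjunction gives $-K_Y=\mathcal{O}_Y(1+1+1+1+2-4)=\mathcal{O}_Y(2)$, ample, so $Y$ is a Fano $3$-fold birational to $X'$ and hence to $X$.

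\textbf{Staying in the Mori category, and the conclusion.} The main obstacle, and the exact point where $X'$ departs from the behaviour in Proposition~\ref{bad-link}, is to verify that the game stays inside the Mori category. I would check that $g_0|_{X'}$ is a genuine $K$-negative divisorial contraction: its contracted curves are the lines of $(y=0)\cap X'$, each of which meets the fibre class positively, so $-K_{X'}\cdot\ell=(1,1)\cdot\ell>0$, and it collapses the surface $(y=0)\cap X'$ onto the curve $C=(X_3=X_4=0)\cap Y\subset\PP(1,1,2)$. The delicate part is to confirm that $Y$ is a legitimate Mori fibre space over a point, i.e.\ that the special quartic $Y$, which omits $X_1^4$ and $X_1^2X_5$ and carries non-generic coefficients on a few monomials, is $\Q$-factorial with only terminal singularities and $\rank\Pic(Y)=1$. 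Since $Y$ avoids the ambient quotient point $(0:0:0:0:1)$, where $G=X_5^2\neq0$, and since $X'\to Y$ is a divisorial contraction from the smooth $3$-fold $X'$, this reduces to a local analysis along $C$; I expect this to be the computational crux. Granting it, $X'/\PP^1$, which is square birational to $X/\PP^1$ via the fibrewise transform, and $Y/\{\mathrm{pt}\}$ are two Mori fibre spaces in the same birational class whose bases have different dimension, hence are not square birational. Therefore $\mathcal{P}(X/\PP^1)$ contains at least two elements, and neither $X'$ nor $X$ is birationally rigid.
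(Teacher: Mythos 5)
Your proposal follows essentially the same route as the paper: smoothness via the distinguished monomials $t^2$, $z^4$, $a_4(u,v)y^4$, $x^3z$ (the paper phrases this as Bertini plus the computation that the base locus is the line $(y=z=t=0)$, smooth there because of $x^3z$), then the 2-ray game on $\F$ ending in the divisorial contraction $(u:v;x:t:z:y)\mapsto(x:z:uy:vy:t)$ onto a quartic $3$-fold of index $2$ in $\PP(1,1,1,1,2)$, and non-rigidity because a Fano over a point cannot be square birational to a fibration over $\PP^1$. Two minor remarks: the cone $\convex\left<(1,0),(-1,1)\right>$ you write is $\Eff(X')$, not $\MMob(X')=\convex\left<(1,0),(0,1)\right>$ (beyond $(0,1)$ the divisor $(y=0)$ is a fixed component) --- harmless, since $-K_{X'}\sim\mathcal{O}(1,1)$ is ample and interior in the correct cone too; and the step you leave ``granted'' (that $Y$ is a genuine Mfs) is not really a computational crux: given the $K$-negativity you verify, it follows formally from standard MMP that an extremal $K$-negative divisorial contraction from the smooth $X'$ produces a terminal, $\Q$-factorial $Y$ with $\rank\Pic(Y)=1$, which is the point the paper handles by citing \cite{Ahm}, Theorem~3.3.
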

\begin{proof} 
A similar, and easier, check to that of Lemma~\ref{sing} shows that $X^\prime$ is smooth: First note that the base locus of the linear system is the line $(u:v;1:0:0:0)$, given by $y=z=t=0$. Then observe that any point on this line is smooth, guaranteed by appearance of the monomial $x^3z\in g$.

Now, let us play the 2-ray game on $\mathbb{F}$ and restrict it to $X'$.
The 2-ray game on $\F$ proceeds, after the fibration to $\PP^1$, by a divisorial contraction to $\PP(1,1,1,1,2)$. This is realised by
\[\bigoplus_{n\geq 1}H^0(\F,\mathcal{O}_\F(0,n))\]
and is given in coordinates by
\[(u:v;x:t:z:y)\mapsto(x:z:uy:vy:t)\]
In particular, the divisor $E:(y=0)\subset\F$ is contracted to the locus $\PP(1,1,2)\subset\PP(1,1,1,1,2)$. The restriction of this map to $X^\prime$ shows that the divisor $E_X=E\cap X$ is contracted to a quartic curve in $\PP(1,1,2)$. The image of $X^\prime$ under this map is a quartic $3$-fold in $\PP(1,1,1,1,2)$, which is a Fano variety of index $2$. Note that, similar to $X$, we can check that $-K_{X^\prime}\sim\mathcal{O}_{X^\prime}(1,1)$. In particular, $-K_{X^\prime}$ is ample, and of course interior in the mobile cone. This variety is also studied in \cite{Ahm} Theorem~3.3.

\end{proof}


\def\cprime{$'$}
\providecommand{\bysame}{\leavevmode\hbox to3em{\hrulefill}\thinspace}
\providecommand{\MR}{\relax\ifhmode\unskip\space\fi MR }
\providecommand{\MRhref}[2]{%
  \href{http://www.ams.org/mathscinet-getitem?mr=#1}{#2}
}
\providecommand{\href}[2]{#2}

\vspace{1.5cm}
\noindent Radon Institute, Austrian Academy of Sciences,\\
 Altenberger Str. 69, A-4040 Linz, Austria\\
e-mail: \url{hamid.ahmadinezhad@oeaw.ac.at}


\end{document}